\pdfoutput=1
\documentclass[12pt]{article}
\usepackage{amsmath,amssymb,amsfonts,here}
\usepackage{tikz}
\setlength{\textheight}{45\baselineskip}
\setlength{\textwidth}{15cm}
\setlength{\voffset}{-3\baselineskip}
\setlength{\oddsidemargin}{21pt}
\setlength{\evensidemargin}{21pt}
\begin{document}
\newtheorem{theorem}{Theorem}[section]
\newtheorem{corollary}[theorem]{Corollary}
\newtheorem{lemma}[theorem]{Lemma}
\newtheorem{remark}[theorem]{Remark}
\newtheorem{example}[theorem]{Example}
\newtheorem{proposition}[theorem]{Proposition}
\newtheorem{definition}[theorem]{Definition}
\newtheorem{assumption}[theorem]{Assumption}
\def\emptyset{\varnothing}
\def\setminus{\smallsetminus}
\def\id{{\mathrm{id}}}
\def\G{{\mathcal{G}}}
\def\E{{\mathcal{E}}}
\def\H{{\mathcal{H}}}
\def\C{{\mathbb{C}}}
\def\N{{\mathbb{N}}}
\def\Q{{\mathbb{Q}}}
\def\R{{\mathbb{R}}}
\def\Z{{\mathbb{Z}}}
\def\Path{{\mathrm{Path}}}
\def\Str{{\mathrm{Str}}}
\def\st{{\mathrm{st}}}
\def\tr{{\mathrm{tr}}}
\def\opp{{\mathrm{opp}}}
\def\a{{\alpha}}
\def\be{{\beta}}
\def\de{{\delta}}
\def\e{{\varepsilon}}
\def\si{{\sigma}}
\def\la{{\lambda}}
\def\th{{\theta}}
\def\lan{{\langle}}
\def\ran{{\rangle}}
\def\isom{{\cong}}
\newcommand{\Hom}{\mathop{\mathrm{Hom}}\nolimits}
\newcommand{\End}{\mathop{\mathrm{End}}\nolimits}
\def\qed{{\unskip\nobreak\hfil\penalty50
\hskip2em\hbox{}\nobreak\hfil$\square$
\parfillskip=0pt \finalhyphendemerits=0\par}\medskip}
\def\proof{\trivlist \item[\hskip \labelsep{\bf Proof.\ }]}
\def\endproof{\null\hfill\qed\endtrivlist\noindent}

%%%%%%%%%%%%%%%%%%%%%%%%%%%%%%
\title{Flatness of $\alpha$-induced bi-unitary connections
and commutativity of Frobenius algebras}
\author{
{\sc Yasuyuki Kawahigashi}\\
{\small Graduate School of Mathematical Sciences}\\
{\small The University of Tokyo, Komaba, Tokyo, 153-8914, Japan}\\
{\small e-mail: {\tt yasuyuki@ms.u-tokyo.ac.jp}}
\\[0,40cm]
{\small iTHEMS Research Group, RIKEN}\\
{\small 2-1 Hirosawa, Wako, Saitama 351-0198,Japan}\\[0,05cm]
{\small and}
\\[0,05cm]
{\small Kavli IPMU (WPI), the University of Tokyo}\\
{\small 5--1--5 Kashiwanoha, Kashiwa, 277-8583, Japan}
}
\maketitle{}
\centerline{\sl Dedicated to the memory of Huzihiro Araki}

\begin{abstract}
The tensor functor called 
$\alpha$-induction produces a new unitary fusion category from a
Frobenius algebra, or a $Q$-system, in a braided
unitary fusion category.  A bi-unitary
connection, which is a finite family of complex number subject
to some axioms, realizes an object in any unitary fusion category.
It also gives a characterization of a finite-dimensional
nondegenerate commuting square in subfactor theory
of Jones and realizes a certain $4$-tensor 
appearing in recent studies of
$2$-dimensional topological order.  We study $\alpha$-induction
for bi-unitary connections, and show that flatness
of the resulting $\alpha$-induced bi-unitary connections implies
commutativity of the original Frobenius algebra.  
This gives a converse of our 
previous result and answers a question raised by R. Longo.
We furthermore give finer correspondence between the
flat parts of the $\alpha$-induced bi-unitary connections
and the commutative Frobenius subalgebras studied by
B\"ockenhauer-Evans.
\end{abstract}

\section{Introduction}

Jones theory of subfactors \cite{J}
has opened a new research direction, which is often called
\textsl{quantum symmetry} today, and this new type of symmetry
is described with a certain tensor category
\cite{BKLR}, \cite{EGNO}.  Such a tensor category is 
also a recent hot topic in physics literatures on
quantum field theory under
the name of \textsl{non-invertible symmetries}.

Roughly speaking, our tensor category has a $\mathbb{C}$-linear
space structure on the morphism spaces and the tensor product
operation on the objects.  In commonly used frameworks to
study tensor categories based on operator algebras, an object
is typically described with one of the following.

\begin{enumerate}
\item A bimodules over type II$_1$ factors
\item An endomorphism of a type III factor
\item A bi-unitary connection
\end{enumerate}

For detailed description of the tensor product operations and
morphism spaces, see \cite[Section 12.4]{EK2} for 1,
\cite{L1}, \cite{L2}, \cite{I} for 2, and
\cite[Section 3]{AH} for 3.  The approach 3 is not new at all,
but has caught much recent attention in relations to
tensor networks and 2-dimensional condensed matter physics
\cite{BMWSHV}, \cite{K4}, \cite{K5}, \cite{K7}.  This approach
has an advantage that everything can be computed within a
finite dimensional setting, while approaches 1 and 2 involve
infinite dimensional Hilbert spaces and operator algebras.
Also see \cite{BKLR} for a recent treatment of tensor categories
and subfactors, and \cite{EGNO} for a more abstract
and algebraic treatment of tensor categories.

Another important notion in subfactor theory
is a commuting square, which
originates in \cite{P1}.  We are particularly interested
in a \textsl{nondegenerate}
commuting square of finite dimensional $C^*$-algebras
with a trace as in \cite[Proposition 9.51]{EK2}.
Schou's thesis \cite{Sc} gave a highly useful characterization
of such a commuting square in terms of a \textsl{bi-unitary
connection} which generalizes a similar notion in \cite{O1}
used for a classification of subfactors.  See
\cite[Theorem 11.2]{EK2} for this characterization.

Let $A,B,C,D,X,Y$ be irreducible bimodules over
II$_1$ factors $N,M,P,Q$, such as ${}_M A_P$.
The composition $T_4(T_3\otimes_M \id_Y)
(\id_X\otimes_P T_1)^*T_2^*$ in the following diagram
gives an intertwiner from ${}_N D_Q$ to itself, 
so this gives a complex number, where we have
$T_1\in\Hom({}_M A\otimes_P Y_Q,{}_M B_Q)$,
$T_2\in\Hom({}_N X\otimes_M B_Q,{}_N D_Q)$,
$T_3\in\Hom({}_N X\otimes_M A_P, {}_N C_P)$,
$T_4\in{}_N C\otimes_P Y_Q,{}_N D_Q)$.

\begin{figure}[h]
\begin{center}
\begin{tikzpicture}[scale=3]
\draw [-to](1.5,1)--(1.75,1);
\draw [-to](1.5,2)--(1.68,2);
\draw [-to](1,1.8)--(1,1.2);
\draw [-to](2,1.8)--(2,1.2);
\draw (1,1)node{${}_N C\otimes_P Y_Q$};
\draw (0.85,2)node{${}_N X\otimes_M A\otimes_P Y_Q$};
\draw (2,1)node{${}_N D_Q$};
\draw (2.1,2)node{${}_N X\otimes_M B_Q$};
\draw (1.65,2.05)node[above]{$\id_X\otimes_P T_1$};
\draw (1.675,1)node[below]{$T_4$};
\draw (1,1.5)node[left]{$T_3\otimes_M\id_Y$};
\draw (2,1.5)node[right]{$T_2$};
\end{tikzpicture}
\label{diag}
\caption{The diagram for generalized quantum $6j$-symbols}
\end{center}
\end{figure}

A family of these complex numbers 
(up to some normalization and equivalence)
gives \textsl{generalized quantum $6j$-symbols} as in
\cite[Section 12.4]{EK2}.
(In \cite[Section 12.4]{EK2}, we have considered only 
two II$_1$ factors $N$ and $M$, but it is easy to extend
the arguments to the case involving four II$_1$ factors.
We say \textsl{generalized} because four algebras are
involved.)  We are interested in a subfactor $N\subset M$
which produces  finitely many irreducible $N$-$N$ bimodules
up to equivalence from tensor powers of ${}_N M_N$, 
and such a subfactor
is said to have a \textsl{finite depth}. As
long as we are interested in a subfactor of finite depth, all
bi-unitary connections we need are of the form in 
Fig.~\ref{diag} by
our result in \cite[Theorem 3.4]{K7}.
In today's understanding, these bi-unitary connections are
identified with $4$-tensors appearing in  
\cite{BMWSHV} as in \cite{K4}.

If we have $M=P$ in the above Fig.~\ref{diag}, a choice
${}_M A_M={}_M M_M$, which is the identity object for the tensor
product, is possible.
For this choice, a bi-unitary connection has a canonical
form and it is said to be \textsl{flat}.  A notion of a flat
bi-unitary connection was originally introduced in \cite{O1}, and
the current general form was studied in 
\cite[Theorem 2.1]{K1} in a purely algebraic context.
This flatness property is 
characterized by \cite[Theorem 11.17]{EK2}.  The name 
\textsl{flat} comes from its analogy to a notion of a flat
connection in differential geometry.  In this analogy, a bipartite
graph is regarded as a discrete analogue of a manifold.

A flat bi-unitary connection canonically arises from a subfactor
of finite depth through \textsl{higher relative commutants}.
This is used for classification
of subfactors.  In fact, if the original subfactor is 
\textsl{hyperfinite} and has finite index and finite depth, then
the celebrated classification theorem in
Popa \cite{P2} shows that this flat bi-unitary connection completely
recovers the original subfactor.

The next topic is a tensor functor called $\alpha$-induction
for a braided fusion category with a Frobenius algebra or a
$Q$-system.  This was first defined in  \cite{LR} and studied
in \cite{X} in a slightly different setting.  It was further
pursued in a more general setting in \cite{BE1}, \cite{BE2},
\cite{BE3}, and studied in a fully general setting in
\cite{BEK1}, \cite{BEK2} with a graphical method in \cite{O2}.
The original definition in \cite{LR} was closely related to
operator algebraic studies of chiral
conformal field theory as in \cite{KL}, \cite{KLM}, but
now this technique has been widely used in various different
context.  See \cite{H} for one recent example.

This $\alpha$-induction has been first studied in the setting
of endomorphisms of type III factors in 
\cite{BE1}, \cite{BE2}, \cite{BE3}, \cite{BEK1}, \cite{BEK2},
and it has been also studied for bimodules over type II$_1$ 
factors.  Our aim in our previous paper \cite{K8} was
to formulate this $\alpha$-induction in terms of
bi-unitary connections.
This paper is a continuation of \cite{K8} and our main result
is Theorem \ref{main}, which gives a positive answer to a
question of Longo on \cite{K8} and clarifies relations
between commutativity of a Frobenius algebra used for 
$\alpha$-induction and flatness of the $\alpha$-induced
bi-unitary connections.  Theorem \ref{corresp} gives
a further detailed understanding on these relations.

A chiral conformal field theory is behind all of these studies.
We refer a reader to \cite{EK3}, \cite{K2}, \cite{K3} for a survey
on relations among subfactor theory, conformal field theory and 
other topics in mathematical physics.

\section*{Acknowledgements}

This work was supported by 
Japan Science and Technology Agency (JST) as 
CREST program JPMJCR18T6 and
a part of Adopting Sustainable Partnerships for 
Innovative Research Ecosystem (ASPIRE), Grant Number JPMJAP2318,
and also by the National Science Foundation under Grant 
No.~DMS-1928930, while the author was in residence at the 
Simons Laufer Mathematical Sciences
Institute in Berkeley, California, during the year of 2024. 
A part of this work was done at Universit\`a di Roma,
``Tor Vergata'', Beijing Institute of Mathematical Sciences 
and Applications, and the Simons Laufer Mathematical Sciences
Institute.  The authors thanks S. Carpi and R. Longo 
for their hospitality at Universit\`a di Roma,
``Tor Vergata'' and Z. Liu at 
Beijing Institute of Mathematical Sciences and Applications.
The author also thanks R. Longo for his question at a conference
``Where Mathematics Meets Quantum Physics''
in Rome in June, 2023.

\section{Preliminaries on braided fusion categories, subfactors
and $\alpha$-induction}
\label{pre}

We give our setting for $\alpha$-induction for
a subfactor $N\subset M$ and endomorphisms of $N$
as in \cite{BEK1}, where $N$ and $M$ are type III factors.  
Throughout this paper, we have $\Delta$,
a finite set of mutually inequivalent
irreducible endomorphisms of $N$ of finite dimension,
as follows. (See \cite[Definition 2.1]{BEK1}.)

\begin{assumption}\label{assump0}{\rm
We have the following for $\Delta$.

(1) The identity automorphism is in $\Delta$.

(2) For any $\lambda\in\Delta$, we have another element
$\mu\in\Delta$ which is equivalent to $\bar\lambda$.

(3) For any $\lambda,\mu\in\Delta$, the composition
$\lambda\mu$ decomposes into a direct sum of irreducible
endomorphisms each of which is equivalent to one in $\Delta$.

(4) The set $\Delta$ has a \textsl{braiding} 
$\e(\lambda,\mu)\in\Hom(\lambda\mu,\mu\lambda)$ as in
\cite[Definition 2.2]{BEK1}.
}\end{assumption}

Note that the finiteness of the index of $\lambda\in\Delta$
implies that the identity automorphism is a direct summand of
$\lambda\bar\lambda$ by \cite[Theorem 4.1]{L2}.

\begin{definition}\label{bfc}{\rm
Let $\Sigma(\Delta)$ be the set of all endomorphisms of $N$
which decompose into finite direct sums of endomorphisms
that are unitarily equivalent to ones
in $\Delta$.  For $\lambda_1,\lambda_2\in\Sigma(\Delta)$,
we define the intertwiner space as
\[
\Hom(\lambda_1,\lambda_2)=\{t\in N\mid
t\lambda_1(x)=\lambda_2(x)t, \mathrm{\ for\ all\ }
x\in N\}.
\]
This gives a unitary rigid braided fusion category 
$\mathcal{C}$ with $\Sigma(\Delta)$ being the set of the objects, 
where the tensor product operation is given by the composition
of endomorphisms and the conjugate operation is given by
the conjugate endomorphism.
}\end{definition}

Note that all unitary braided fusion categories arise in this
way (for the unique injective type III$_1$ factor $N$, for
example).  This is because we can construct $6j$-symbols
from a braided fusion category and then construct a hyperfinite
type II$_1$ subfactor $N\subset M$ as in \cite[Section 13.3]{EK2},
and then we can make tensor products of $N$ and $M$ with
a common injective type III$_1$ factor.
Note that if the braiding
is nondegenerate in the sense of \cite[Definition 2.3]{BEK1},
this $\mathcal{C}$ is a unitary \textsl{modular tensor category},
but we do not need this nondegeneracy in this paper.

We further make an assumption on connectedness of certain
bipartite graphs as follows, which will be necessary in the
following Section.  This is assumed throughout the paper, except
for Section \ref{multi}.

\begin{assumption}\label{assump1}{\rm
We have $\mu\in\Sigma(\Delta)$ which satisfies
the following condition.
Consider a bipartite graph defined as follows.
Let both even and odd vertex sets be labeled with
the elements in $\Delta$.  The number of edges between
the even vertex $\nu_1$ and the odd vertex $\nu_2$ is
given by $\dim\Hom(\nu_1\mu,\nu_2)$.  We assume that this
bipartite graph is connected.
}\end{assumption}

In other words, this Assumption \ref{assump1}
means that for any $\nu\in\Delta$,
we have some positive integer $k$ so that
we have $\dim\Hom((\mu\bar\mu)^k,\nu)>0$.  We can always
achieve this by setting $\mu=\bigoplus_{\nu\in\Delta}\nu$.
Though irreducibility of $\mu$ was assumed in 
\cite[Assumption 2.2]{K8}, the construction and arguments
in \cite{K8} work without this irreducibility assumption.
We fix $\mu$ satisfying Assumption \ref{assump1} below.

We consider an irreducible 
subfactor $N\subset M$ with finite index whose canonical
endomorphism $\theta$ being in $\Sigma(\Delta)$.  
Such a subfactor automatically
has a finite depth.  For a fixed $N$ and
a fusion category of such endomorphisms, an irreducible
extension $M$ of $N$
is in a bijective correspondence to a \textsl{$Q$-system}
$(\theta,w,x)$ as in \cite[Chapter 3]{BKLR}, 
where $w\in\Hom(\id,\theta)$
and $x\in\Hom(\theta,\theta^2)$ satisfy the relations
as in \cite[Chapter 3]{BKLR}, and $\dim\Hom(\id,\theta)=1$.
(The original definition of a $Q$-system was given in 
\cite[Section 6]{L3} and had a redundant property.
One-dimensionality of $\Hom(\id,\theta)$ corresponds to
irreducibility of $M$.)
Such a $Q$-system gives a larger
factor $M\supset N$, the canonical endomorphism 
$\gamma:M\to M$, and an isometry $v\in M$ satisfying
$v\in\Hom(\id,\gamma)$, $M=Nv$ and $\gamma(v)=x$.
We say the $Q$-system $(\theta,w,x)$ is \textsl{local}
when we have $\e(\theta,\theta)x=x$.
If a modular tensor category arises as DHR-endomorphisms
of a completely rational conformal net as in \cite{KLM}, 
then a $Q$-system on it corresponds to an extension of
a conformal net, and this locality exactly corresponds
to locality of the extension as in the setting of
\cite[Theorem 4.9]{LR}.  This is why the
name locality is used here and 
it was called \textsl{chiral locality} in \cite{BEK1} to
emphasize chirality of a local conformal net.
In algebraic literature, this $Q$-system is
often called a \textsl{Frobenius algebra} and its locality
is called \textsl{commutativity}. 
We also use these names in this paper. (In this paper, we
only consider $C^*$-tensor categories. So our Frobenius
algebra in this paper is what is called $C^*$ Frobenius algebra in 
\cite{BKLR}.  Also see \cite[Theorem 2.3]{M}
for a bimodule formulation of a $Q$-system.)

Fix a (not necessarily commutative) Frobenius algebra $(\theta,w,x)$
and consider the corresponding $M\supset N$ and the inclusion map
$\iota:N\hookrightarrow M$.

The procedure called
\textsl{$\alpha$-induction} was defined in \cite[Proposition 3.9]{LR} 
as follows.
\[
\alpha_\lambda^\pm=\bar\iota^{-1}\cdot
\mathrm{Ad}(\varepsilon^\pm(\lambda,\theta))\cdot\lambda\cdot\bar\iota,
\]
where $\lambda$ is an endomorphism of $N$ in $\Sigma(Delta)$, 
$\theta=\bar\iota\cdot\iota$
is the dual canonical endomorphism of $N\subset M$, and $\pm$
stands for a choice of positive/negative braiding.
It is a nontrivial fact that
$\mathrm{Ad}(\varepsilon^\pm(\lambda,\theta))\cdot
\lambda\cdot\bar\iota(x)$ is in the image of $\bar\iota$ 
for $x\in M$.  We have $\alpha_\lambda^\pm(x)=\lambda(x)$ for
$x\in N$ and $\alpha_\lambda^\pm(v)=
\varepsilon^\pm(\lambda,\theta)^*v$.
See \cite[Section 3]{BEK1} for basic properties 
of $\alpha$-induction in this setting.

We construct a bi-unitary connection $W_4(\alpha_\lambda^+,\mu)$
as follows.  (See \cite[Section 3]{K8} for a general definition
of a bi-unitary connection.)

\begin{definition}\label{W4}{\rm
Let $a_1,a_2,a_3,a_4$ be irreducible $M$-$N$ morphisms
arising from $\Delta$ and the subfactor $N\subset M$.
Consider the diagram in Fig.~\ref{conn2x}.
By composing isometries $T_1\in\Hom(a_1\mu,a_2)$,
$T_2\in\Hom(\alpha_\lambda^+ a_2,a_4)$,
$T_3\in\Hom(\alpha_\lambda^+ a_1,a_3)$, and
$T_4\in\Hom(a_3\mu,a_4)$, we obtain a complex number
$T_4 T_3 \alpha_\lambda^+(T_1^*)T_2^*\in\Hom(a_4,a_4)$.
We define the connection $W_4(\alpha^+_\lambda,\mu)$ by 
this number and represent this as in Fig.~\ref{conn11}, where
labels for edges are dropped.
}\end{definition}

\begin{figure}[H]
\begin{center}
\begin{tikzpicture}[scale=2.5]
\draw [-to](1.2,1)--(1.8,1);
\draw [-to](1.3,2)--(1.7,2);
\draw [-to](1,1.8)--(1,1.2);
\draw [-to](2,1.8)--(2,1.2);
\draw (1,1)node{$a_3\mu$};
\draw (2,1)node{$a_4$};
\draw (1,2)node{$\alpha_\lambda^+ a_1\mu$};
\draw (2,2)node{$\alpha_\lambda^+ a_2$};
\draw (1.5,2)node[above]{$\alpha_\lambda^+(T_1)$};
\draw (1.5,1)node[below]{$T_4$};
\draw (1,1.5)node[left]{$T_3$};
\draw (2,1.5)node[right]{$T_2$};
\end{tikzpicture}
\caption{The diagram for the connection $W_4(\alpha_\lambda^+,\mu)$}
\label{conn2x}
\end{center}
\end{figure}

\begin{figure}[H]
\begin{center}
\begin{tikzpicture}%[scale=0.8]
\draw [-to](1,1)--(2,1);
\draw [-to](1,2)--(2,2);
\draw [-to](1,2)--(1,1);
\draw [-to](2,2)--(2,1);
\draw (1,1)node[below left]{$a_3$};
\draw (2,1)node[below right]{$a_4$};
\draw (1,2)node[above left]{$a_1$};
\draw (2,2)node[above right]{$a_2$};
\end{tikzpicture}
\caption{The standard diagram for a connection $W_4(\alpha^+_\lambda,\mu)$}
\label{conn11}
\end{center}
\end{figure}

That is, all the four vertex sets are labeled with represenatives
of the irrducible $M$-$N$ morphisms arising from $\Delta$ and
$\iota:N\hookrightarrow M$.  For such irrducible $M$-$N$ morphisms
$a, b$, the number of edges between $a$ and $b$ for the both
horizontal graphs is given by $\dim\Hom(a\mu,b)$ and
the number of edges between $a$ and $b$ for the both
vertical graphs is given by $\dim\Hom(\alpha^+_\lambda a,b)$
The complex numbers arising in this way satisfy
\textsl{bi-unitarity} axiom \cite[Definition 2.2]{K5}.

Note that we take $T_1,T_2,T_3,T_4$ from orthonormal bases of
these $\Hom$ spaces, where we have natural inner products.
We use the same $T_1$, $T_4$ for all such bi-unitary connections.
The bi-unitary connection values have ambiguity depending of
choices of $T_2$, $T_3$.  Different such choices give
equivalent bi-unitary connections.  (See \cite[Section 3]{AH}.)

Connectivity of the horizontal graph for this
bi-unitary connection is equivalent to the condition that
for any irreducible $M$-$N$ morphisms $a,b$, we have a
positive integer $k$ satisfying
$\dim\Hom(a(\mu\bar\mu)^k,b)>0$.  This holds true because we have a
positive integer $k$ satisfying $\dim\Hom((\mu\bar\mu)^k,\bar a b)>0$
by Assumption \ref{assump1}.  Note that connectivity of
the vertical graphs are now not required.

This definition is logically fine, but rather unsatisfactory
because it requires knowledge on intertwiners spaces of
$\alpha^+_\lambda$ as an endomorphism, while we intend
to construct a bi-unitary connection corresponding to
$\alpha^+_\lambda$.  As shown in
\cite[Section 4]{K8}, we first draw Fig.~\ref{conn12}
and then can apply some topological moves
to remove this problem as in 
Fig.~\ref{conn13}, which now involves only intertwiners among
$N$-$N$ and $M$-$N$ morphisms.  
(See \cite[Section 3]{BEK1} for convention of graphical
calculus we use.  Note that we compose intertwiners in the
direction from the top to the bottom, while some authors
use the opposite convention.)
The complex number represented
with this diagram gives the value of the $\alpha$-induced
bi-unitary connection $W_4(\alpha^+_\lambda,\mu)$.

\begin{figure}[H]
\begin{center}
\begin{tikzpicture}[scale=0.6]
\draw [thick](6,13.5) arc (0:180:1.75);
\draw [thick](2.5,13.5) arc (90:180:1.5);
\draw (4,12) arc (0:90:1.5);
\draw (3.5,2.5) arc (270:360:1.5);
\draw (5,7) arc (0:90:1);
\draw (2,4) arc (270:360:1);
\draw [thick](4,8) arc (90:180:1);
\draw [thick](2,4) arc (180:270:1.5);
\draw [thick](3.5,2.5) arc (180:360:1.25);
\draw [thick](1,5) arc (180:270:1);
\draw [thick](1,5)--(1.9,5.9);
\draw [thick](2.1,6.1)--(3,7);
\draw [thick](1,12)--(2.4,10.6);
\draw [thick](2.6,10.4)--(4,9);
\draw [ultra thick](2,6)--(1,7);
\draw [ultra thick](1,7)--(1,9);
\draw [ultra thick](1,9)--(2.5,10.5);
\draw (2.5,10.5)--(4,12);
\draw (5,4)--(5,7);
\draw [thick](4,8)--(4,9);
\draw (3,5)--(2,6);
\draw [thick](6,2.5)--(6,13.5);
\draw (4,13.5)node{$\lambda$};
\draw (1,13.5)node{$a_2$};
\draw (1,8)node[left]{$\alpha_\lambda^+$};
\draw (1,4)node{$a_1$};
\draw (3,4)node{$\lambda$};
\draw (5,4)node[left]{$\mu$};
\draw (2,2.5)node{$a_3$};
\draw (3.5,1.25)node{$a_4$};
\end{tikzpicture}
\caption{The diagram for the connection in Fig.~\ref{conn2x}}
\label{conn12}
\end{center}
\end{figure}

\begin{figure}[H]
\begin{center}
\begin{tikzpicture}[scale=0.8]
\draw [thick](5,7) arc (0:180:1);
\draw [thick](3,7) arc (90:180:1);
\draw [thick](2,6) arc (90:180:1);
\draw [thick](1,4) arc (180:270:1);
\draw [thick](2,3) arc (180:270:1);
\draw [thick](3,2) arc (180:360:1);
\draw [thick](1,4)--(1,5);
\draw [thick](5,2)--(5,7);
\draw (4,6) arc (0:90:1);
\draw (3,5) arc (0:90:1);
\draw (2,3) arc (270:360:1);
\draw (3,2) arc (270:360:1);
\draw (4,3)--(4,4);
\draw (4,5)--(4,6);
\draw (3,4)--(4,5);
\draw (3,5)--(3.4,4.6);
\draw (3.6,4.4)--(4,4);
\draw (3,8)node{$a_4$};
\draw (2,7)node{$a_2$};
\draw (1,4.5)node[left]{$a_1$};
\draw (4,3.5)node[right]{$\mu$};
\draw (4,5.5)node[right]{$\lambda$};
\draw (2,2)node{$a_3$};
\end{tikzpicture}
\caption{Redrawing of Fig.~\ref{conn12} with topological moves}
\label{conn13}
\end{center}
\end{figure}

We investigate flatness of the $\alpha$-induced
bi-unitary connection $W_4(\alpha^+_\lambda,\mu)$
in this paper.

\section{Flatness of $\alpha$-induced bi-unitary connections}
\label{flat}

We choose $\mu$ as in Assumption \ref{assump0} and
consider $\alpha$-induced bi-unitary connections
$W_4(\alpha^\pm_\lambda,\mu)$ as in the previous Section.
For simplicity of notations, we use only the positive induction
$\alpha^+$ in this Section, but the case of $\alpha^-$ is
completely parallel, needless to say.
We study an issue of flatness of the bi-unitary connections
$W_4(\alpha^\pm_\lambda,\mu)$.

We now present our main Theorem in this paper.
The ``if'' part of the following Theorem
is \cite[Theorem 5.2]{K8}, and the new ``only if'' part gives
a converse.  This converse part gives an answer to
a question of R. Longo made in the conference 
``Where Mathematics Meets Quantum Physics ''
in the summer of 2023 at Rome.  We also give a new proof
for the ``if'' part, because we need this argument for
the ``only if'' part.
Note that commutativity of the Frobenius algebra, or locality
of the $Q$-system, was assumed in all of \cite[Section 5]{K8}.

\begin{theorem}\label{main}
We fix an object $\mu$ in the original braided
fusion category $\mathcal{C}$
as in Assumptions \ref{assump0} and \ref{assump1} and a Frobenius
algebra $(\theta,w,x)$ in $\mathcal{C}$.  Then
$\alpha$-induced bi-unitary connections $W_4(\alpha^+_\lambda,\mu)$
arising from the Frobenius algebra
are flat with respect to the initial vertex $\iota$
for all $\lambda$ if and only if the Frobenius
algebra is commutative, or the $Q$-system is local.
\end{theorem}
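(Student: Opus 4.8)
The plan is to reduce flatness of $W_4(\a^+_\la,\mu)$ to a concrete intertwiner identity and then to match that identity against the locality relation $\e(\th,\th)x=x$. First I recall the characterization of flatness from \cite[Theorem 11.17]{EK2}: flatness with respect to the initial vertex $\iota$ means that the parallel transport defined by the connection around any closed loop of the horizontal--vertical graph based at $\iota$ acts as a scalar, equivalently that the two connection values obtained by transporting a string along two paths with the same endpoints agree. Using the explicit graphical form of the connection value in Fig.~\ref{conn13}, which involves only intertwiners among $N$-$N$ and $M$-$N$ morphisms together with the braiding $\e^+(\la,\th)$ hidden inside $\a^+_\la$, I would translate this holonomy condition into an equality of two compositions of the isometries $T_1,T_2,T_3,T_4$ and the multiplication $x$, valid for every choice of the vertices $a_1,\dots,a_4$, every $\la$, and every basis vector of the relevant $\Hom$-spaces.

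For the ``if'' direction I re-derive \cite[Theorem 5.2]{K8} in a form suited to the converse. Assuming commutativity $\e(\th,\th)x=x$, the key move is that locality allows the multiplication strand $x$ to be slid through the braiding strand carrying $\th$ in the diagram of Fig.~\ref{conn13}. I would compute the holonomy around a set of loops that generate the path structure of the horizontal graph based at $\iota$ --- which is connected by Assumption~\ref{assump1} --- and show, using $\e(\th,\th)x=x$ to commute the relevant strands, that each such holonomy collapses to the identity. This establishes path independence, hence flatness, in a way that isolates exactly where the commutativity relation is used.

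For the new ``only if'' direction I run this computation in reverse. Assuming $W_4(\a^+_\la,\mu)$ is flat for all $\la$, I apply flatness to $\la=\th$, which lies in $\Sigma(\Delta)$, and place the base vertex at $a_1=\iota$, so that the generating loop through $\iota$ produces a flatness scalar equation whose only nontrivial ingredient is the monodromy $\e(\th,\th)$ acting on the multiplication $x$. Stripping the isometries $T_i$ using the freedom in the choice of orthonormal bases of the $\Hom$-spaces and the connectivity supplied by Assumption~\ref{assump1}, the flatness equation becomes the statement that $\e(\th,\th)x$ and $x$ agree as elements of $\Hom(\th,\th^2)$ after pairing against all admissible intertwiners; since $\dim\Hom(\id,\th)=1$ by irreducibility of $M$, this pairing is faithful and yields $\e(\th,\th)x=x$.

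The main obstacle is precisely this last extraction. Flatness is a family of scalar (holonomy) identities, whereas commutativity is an operator equation in the space $\Hom(\th,\th^2)$, so I must argue that the scalars recorded by the flat connection, as the loops based at $\iota$ vary, are rich enough to recover the full operator $\e(\th,\th)x-x$. The delicate points are handling the gauge ambiguity coming from the choices of $T_2,T_3$ --- and checking that the resulting conclusion is gauge independent --- and verifying that Assumption~\ref{assump1} guarantees enough paths out of $\iota$ that no component of the operator identity is lost.
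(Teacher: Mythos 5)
Your plan is a genuinely different route from the paper's: you aim at a direct diagrammatic/holonomy computation, sliding the multiplication morphism $x$ through braiding strands in Fig.~\ref{conn13} and reading off $\e(\th,\th)x=x$ from loop scalars. The paper instead argues entirely by dimension counting: it invokes Sato's theorem that $\End((\bar\alpha_\lambda^+\alpha_\lambda^+)^k)$ and $\End(\alpha_\lambda^+(\bar\alpha_\lambda^+\alpha_\lambda^+)^k)$ compute the flat part of $W_4(\alpha^+_\lambda,\mu)$, compares these with $\End(\alpha^+_{(\bar\lambda\lambda)^k}\iota)=\End(\iota(\bar\lambda\lambda)^k)$ via the B\"ockenhauer--Evans inequality $\dim\Hom(\alpha^+_{\lambda_1},\alpha^+_{\lambda_2})\le\dim\Hom(\theta\lambda_1,\lambda_2)$, and for the converse specializes to $\lambda_2=\id$ to obtain $Z_{\lambda,0}=\dim\Hom(\lambda,\theta)$ for all $\lambda$, which is exactly the numerical criterion for locality in \cite[Proposition 3.2]{BE4}. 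Note also that the paper's ``if'' direction is not the strand-sliding argument you describe; it is the same dimension comparison run in the other direction, using that commutativity forces equality in the B\"ockenhauer--Evans inequality.

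The genuine gap is in your ``only if'' direction, and you have in effect flagged it yourself without closing it. Flatness is a family of \emph{scalar} identities, and what those scalars naturally detect are dimensions of intertwiner spaces (equivalently, monodromies averaged against bases of $\Hom$-spaces), whereas chiral locality $\e(\th,\th)x=x$ is an \emph{operator} identity involving a single braiding, not a monodromy. Your proposed extraction --- ``stripping the isometries $T_i$'' and appealing to $\dim\Hom(\id,\th)=1$ to make a pairing faithful --- does not work as stated: the element $\e(\th,\th)x-x$ lives in $\Hom(\th,\th^2)$, which is generally high-dimensional, and one-dimensionality of $\Hom(\id,\th)$ gives no faithfulness of the pairings you can actually realize as holonomy scalars. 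This passage from a numerical condition to the operator identity is precisely the content of \cite[Proposition 3.2]{BE4} (locality holds if and only if $Z_{\lambda,0}=\dim\Hom(\lambda,\theta)$ for all $\lambda$), and some such input --- or a full reproof of it --- is indispensable. You would also need the role that Assumption~\ref{assump1} actually plays in the paper: flatness for $\lambda=\mu$ yields the equality $\dim\Hom(\alpha^+_{\lambda_1},\alpha^+_{\lambda_2})=\dim\Hom(\theta\lambda_1,\lambda_2)$ for \emph{all} $\lambda_1,\lambda_2\in\Delta$ because every element of $\Delta$ occurs in some $(\bar\mu\mu)^k$; your choice of the single loop at $a_1=\iota$ with $\lambda=\theta$ does not obviously generate enough identities. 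As written, the converse is a program with its hardest step left open rather than a proof.
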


\begin{proof}
We first give a proof for the ``if'' part.
For any $\lambda\in\Delta$ and a nonnegative 
integer $k$, we have a natural inclusion
\begin{equation}
\label{ineq1}
\End((\bar\alpha_\lambda^+\alpha_\lambda^+)^k)
\subset \End((\bar\alpha_\lambda^+\alpha_\lambda^+)^k\iota),
\end{equation}
and 
\begin{equation}
\label{ineq1p}
\End(\alpha_\lambda^+(\bar\alpha_\lambda^+\alpha_\lambda^+)^k)
\subset \End(\alpha_\lambda^+
(\bar\alpha_\lambda^+\alpha_\lambda^+)^k\iota),
\end{equation}
and by \cite[Theorem 3.3]{S}, we know
that $\End((\bar\alpha_\lambda^+\alpha_\lambda^+)^k)$ 
and $\End(\alpha_\lambda^+(\bar\alpha_\lambda^+\alpha_\lambda^+)^k)$
give the flat part of this $\alpha$-induced bi-unitary connection.
For the dimensions of these endomorphism spaces, we further have
the following inequality.
\begin{align}
\label{ineq2}
\dim\End((\bar\alpha_\lambda^+\alpha_\lambda^+)^k)
&=
\dim\End((\alpha^+_{(\bar\lambda\lambda)^k})\notag\\
&\le\dim\End(\alpha^+_{(\bar\lambda\lambda)^k}\iota)\notag\\
&=\dim\End(\iota(\bar\lambda\lambda)^k)\notag\\
&=\dim\Hom(\theta(\bar\lambda\lambda)^k,(\bar\lambda\lambda)^k),
\end{align}
and
\begin{align}
\label{ineq2p}
\dim\End(\alpha_\lambda^+(\bar\alpha_\lambda^+\alpha_\lambda^+)^k)
&=
\dim\End((\alpha^+_{\lambda(\bar\lambda\lambda)^k})\notag\\
&\le\dim\End(\alpha^+_{\lambda(\bar\lambda\lambda)^k}\iota)\notag\\
&=\dim\End(\iota(\lambda\bar\lambda\lambda)^k)\notag\\
&=\dim\Hom(\theta\lambda(\bar\lambda\lambda)^k,
\lambda(\bar\lambda\lambda)^k),
\end{align}
because $\alpha^+_{(\bar\lambda\lambda)^k}$ and
$\alpha^+_{\lambda(\bar\lambda\lambda)^k}$ 
are endomorphisms of $M$ extending the 
endomorphisms $\lambda(\bar\lambda\lambda)^k$ and
$(\bar\lambda\lambda)^k$ of $N$, respectively.

Recall that for $\lambda_1,\lambda_2\in\Sigma(\Delta)$, we have 
\begin{equation}
\label{ineq3}
\dim\Hom(\alpha^+_{\lambda_1},\alpha^+_{\lambda_2})\le
\dim\Hom(\theta\lambda_1,\lambda_2)
\end{equation}
by \cite[Theorem 3.9]{BE2}, and we have an equality here
if we have commutativity of the Frobenius
algebra, or locality of the $Q$-system.
(As explained in the first two lines of \cite[page 455]{BEK1},
this inequality does not require commutativity of the Frobenius
algebra, but equality does.)

If we have commutativity of the Frobenius
algebra, or locality of the $Q$-system, then we have
an equality in (\ref{ineq3}), and this gives equalities
in (\ref{ineq2}) and (\ref{ineq2p}), which in turn give
equalities in (\ref{ineq1}) and (\ref{ineq1p}).
Because $\End((\bar\alpha_\lambda^+\alpha_\lambda^+)^k)$ 
and $\End(\alpha_\lambda^+(\bar\alpha_\lambda^+\alpha_\lambda^+)^k)$
give the flat parts of our $\alpha$-induced bi-unitary connection
as above, these equalities mean that 
the $\alpha$-induced bi-unitary connection
$W_4(\alpha^+_\lambda,\mu)$ is flat.

We now prove the converse.
If we have flatness of $W_4(\alpha^+_\lambda,\mu)$ for
all $\lambda\in\Delta$, we have equalities in (\ref{ineq1}),
(\ref{ineq2}), and (\ref{ineq2p}).  
For all $\lambda_1,\lambda_2\in\Delta$
appearing in the irreducible decompositions of
$(\bar\lambda\lambda)^k$ we have 
\begin{equation}
\label{ineq4}
\dim\Hom(\alpha^+_{\lambda_1},\alpha^+_{\lambda_2})\le
\dim\Hom(\theta\lambda_1,\lambda_2)
\end{equation}
as above, but equalities in (\ref{ineq2}) and (\ref{ineq2p})
mean that we have an equality in (\ref{ineq4}) for all such
$\lambda_1,\lambda_2$.  Assumption \ref{assump1} implies that
all $\lambda_1,\lambda_2\in\Delta$ arise in this way
if we choose $\lambda$ to be $\mu$ in Assumption \ref{assump1}.
This in particular implies
\begin{equation}
\label{ineq5}
\dim\Hom(\alpha^+_{\lambda},\alpha^+_{\id})=
\dim\Hom(\theta\lambda,\id)=
\dim\Hom(\lambda,\theta).
\end{equation}

Recall that the modular invariant matrix $Z$ is
defined as $Z_{\lambda_1,\lambda_2}=
\dim\Hom(\alpha^+_{\lambda_1},\alpha^-_{\lambda_2})$
as in \cite[Definition 5.5]{BEK1}.
This gives that we now have
\[
Z_{\lambda,0}=\dim\Hom(\alpha^+_{\lambda},\alpha^-_{\id})=
\dim\Hom(\alpha^+_{\lambda},\alpha^+_{\id})
\dim\Hom(\lambda,\theta),
\]
where $0$ denotes the identity automorphism, and
this in turn gives commutativity of the Frobenius
algebra, or locality of the $Q$-system by
\cite[Proposition 3.2]{BE4}.
\end{proof}

We further continue this study on relations between
flatness of $\alpha$-induced bi-unitary connections
$W_4(\alpha_\lambda^+,\mu)$
and commutativity of the Frobenius algebra $(\theta, w,x)$,
or locality of the $Q$-system.
We do not assume 
commutativity of the Frobenius algebra $(\theta, w,x)$ here.

As in \cite[Theorem 4.7]{BE4}, we have an intermediate
subfactor $N\subset M_+\subset M$ where the dual canonical
endomorphism $\theta_+\in\Sigma(\Delta)$ arising from $N\subset M_+$ is
given by $\bigoplus_{\lambda\in\Delta} Z_{\lambda,0}\lambda$,
where the modular invariant matrix $Z$ is given by
$Z_{\lambda_1,\lambda_2}=
\dim\Hom(\alpha^+_{\lambda_1},\alpha^-_{\lambda_2})$
as in \cite[Definition 5.5]{BEK1}.
This is a ``commutative part'' of the original Frobenius
algebra $(\theta,w,x)$.
We write $\tilde\alpha^+_{+;\lambda}$ for a positive
$\alpha$-induction of $\lambda\in\Sigma(\Delta)$ from
$N$ to $M_+$ as in \cite[Section 4, page 276]{BE4}.

Then \cite[Lemma 5.1]{BE4} gives
\[
\End(\alpha^+_{{(\bar\lambda\lambda)}^k})=
\End(\tilde\alpha^+_{+;{(\bar\lambda\lambda)}^k}),
\]
and the left-hand side gives the flat part of 
$W_4(\alpha^+_\lambda,\mu)$ as in the proof of Theorem \ref{main}
Now the right-hand side of the above inclusion is equal to
$\End(\tilde\alpha^+_{+;{(\bar\lambda\lambda)}^k} \iota_+)$
where $\iota_+$ is the inclusion map from $N$ to $M_+$
because the commutativity of the Frobenius algebra
for the inclusion $N\subset M_+$ gives flatness of the 
$\alpha$-induced bi-unitary connection as in Theorem \ref{main}.
That is, the flat parts of the $\alpha$-induced
bi-unitary connection $W_4(\alpha^+_\lambda,\mu)$ 
given by the intertwiner spaces of the flat bi-unitary
connections $W_4(\tilde\alpha^+_{+,\lambda},\mu)$ 
corresponding to the commutative part Frobenius algebra
arising from $\theta_+$ as in \cite[Theorem 4.7]{BE4}.
We have thus obtained the following Theorem.

\begin{theorem}\label{corresp}
We fix an irreducible object $\mu$ in the original braided
fusion category $\mathcal{C}$
as in Assumptions \ref{assump0} and \ref{assump1} and a Frobenius
algebra $(\theta,w,x)$ in $\mathcal{C}$.  Then the flat parts
of the $\alpha$-induced bi-unitary connections $W_4(\alpha_\lambda^+,\mu)$
are given by the intertwiner spaces of the flat bi-unitary
connections $W_4(\tilde\alpha^+_{+,\lambda},\mu)$ 
corresponding to the commutative part Frobenius algebra
arising from $\theta_+$.
\end{theorem}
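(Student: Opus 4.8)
The plan is to factor the whole construction through the intermediate subfactor $N\subset M_+\subset M$ of B\"ockenhauer--Evans \cite[Theorem 4.7]{BE4}, whose dual canonical endomorphism is $\theta_+=\bigoplus_{\lambda\in\Delta}Z_{\lambda,0}\lambda$. First I would recall that $\theta_+$ carries a \emph{commutative} Frobenius algebra structure -- the ``commutative part'' of $(\theta,w,x)$ -- so that the associated positive $\alpha$-induction $\tilde\alpha^+_{+;\lambda}$ from $N$ to $M_+$ is defined and, because the $M_+$-algebra is local, well behaved. Setting up this inclusion and the identification \cite[Lemma 5.1]{BE4}
\[
\End\bigl(\alpha^+_{(\bar\lambda\lambda)^k}\bigr)=\End\bigl(\tilde\alpha^+_{+;(\bar\lambda\lambda)^k}\bigr)
\]
(together with the analogous statement for the odd layers $\alpha^+_{\lambda(\bar\lambda\lambda)^k}$) is the conceptual core; everything else is matching the two sides with the correct flat data.

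Second, I would import from the proof of Theorem \ref{main} the identification of the flat part. By Satoh \cite[Theorem 3.3]{S}, the algebras $\End\bigl((\bar\alpha^+_\lambda\alpha^+_\lambda)^k\bigr)$ and $\End\bigl(\alpha^+_\lambda(\bar\alpha^+_\lambda\alpha^+_\lambda)^k\bigr)$ furnish precisely the flat part of the bi-unitary connection $W_4(\alpha^+_\lambda,\mu)$. Using multiplicativity of $\alpha$-induction to rewrite $(\bar\alpha^+_\lambda\alpha^+_\lambda)^k=\alpha^+_{(\bar\lambda\lambda)^k}$ exactly as in that proof, the left-hand side of the displayed equality becomes the flat part we wish to describe.

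Third, I would exploit locality on the $M_+$ side. Since the Frobenius algebra for $N\subset M_+$ is commutative, Theorem \ref{main} applies verbatim and shows that $W_4(\tilde\alpha^+_{+,\lambda},\mu)$ is flat. Hence the inclusion $\End\bigl(\tilde\alpha^+_{+;(\bar\lambda\lambda)^k}\bigr)\subset\End\bigl(\tilde\alpha^+_{+;(\bar\lambda\lambda)^k}\,\iota_+\bigr)$ is in fact an equality, where $\iota_+\colon N\hookrightarrow M_+$ is the inclusion, so the right-hand side coincides with the full intertwiner spaces that define this flat connection. Chaining the three identifications then yields that the flat part of $W_4(\alpha^+_\lambda,\mu)$ is given exactly by the intertwiner spaces of the flat connections $W_4(\tilde\alpha^+_{+,\lambda},\mu)$, as claimed.

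The step I expect to be the main obstacle is the layer-by-layer bookkeeping underlying the second and third paragraphs: one must verify that the abstract flat part characterized via \cite{S} is matched, at every level $k$ and in both the even and odd gradings, with the $\alpha$-induced endomorphism algebras, and that this matching is compatible with the commutative-part identification of \cite{BE4}. In particular one must check that Assumption \ref{assump1} -- with $\lambda$ specialized to $\mu$ -- guarantees that every pair $\lambda_1,\lambda_2\in\Delta$ is reached, so that the correspondence captures the \emph{entire} flat part rather than a proper subalgebra.
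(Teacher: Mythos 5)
Your proposal follows essentially the same route as the paper's own argument: identify the flat part of $W_4(\alpha^+_\lambda,\mu)$ with $\End(\alpha^+_{(\bar\lambda\lambda)^k})$ via \cite[Theorem 3.3]{S} as in the proof of Theorem \ref{main}, invoke \cite[Lemma 5.1]{BE4} to equate this with $\End(\tilde\alpha^+_{+;(\bar\lambda\lambda)^k})$ for the intermediate subfactor $N\subset M_+$ of \cite[Theorem 4.7]{BE4}, and use commutativity of the Frobenius algebra for $N\subset M_+$ together with Theorem \ref{main} to conclude that the latter equals the full intertwiner space $\End(\tilde\alpha^+_{+;(\bar\lambda\lambda)^k}\iota_+)$ of the flat connection $W_4(\tilde\alpha^+_{+,\lambda},\mu)$. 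The argument is correct and matches the paper, including the bookkeeping concerns you flag at the end.
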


The meaning of this Theorem is understood as follows.
We start with a not necessarily commutative Frobenius algebra
$(\theta,w,x)$,
apply the $\alpha^+$-induction to get a bi-unitary connection,
and look at it flat parts, which are given as the commutants 
within $N$.  In another way, we first look at the commutative
part Frobenius algebra arising from $\theta_+$, apply 
the $\alpha^+$-induction to get a flat bi-unitary connection,
and look at the intertwiner spaces it gives.  The two
resulting finite dimensional spaces are the same.

\section{$\mathbb{Z}/n{\mathbb{Z}}$-grading and multi-fusion categories}
\label{multi}

As pointed out in \cite[Section 6]{K8},
Assumption \ref{assump1} is not satisfied for any
\textsl{irreducible} $\mu$
even for the most well-studied modular tensor category
$SU(2)_k$ arising from the Wess-Zumino-Witten models
due to the $\mathbb{Z}/2{\mathbb{Z}}$-grading.
(See  \cite[Subsection 16.2.3]{DMS}
for the Wess-Zumino-Witte models.)
We study this issue and extend considerations in the previous
Section based on ideas in \cite[Section 6]{K8}.

Let $n$ be a positive integer larger than $1$.  By a
$\mathbb{Z}/n{\mathbb{Z}}$-grading on $\Delta$, we mean the 
following.

\begin{assumption}\label{assump2}{\rm
We have a disjoint union 
$\Delta=\bigcup_{j\in\mathbb{Z}/n{\mathbb{Z}}}\Delta_j$
so that for $\lambda_1\in \Delta_j$ and $\lambda_2\in\Delta_k$,
the composition $\lambda_1\lambda_2$ decomposes into 
a direct sum of elements in $\Delta_{j+k}$.
}\end{assumption}

We now assume the above Assumption \ref{assump2}, where
each $\Delta_j$ is non-empty.  Note that we have $\id\in\Delta_0$
and if we have $\lambda\in\Delta_j$, then we have 
$\bar\lambda\in\Delta_{-j}$.  It is well-known that the modular
tensor category arising from the Wess-Zumino-Witten model
$SU(n)_k$ has a $\mathbb{Z}/n{\mathbb{Z}}$-grading, where $k$
is a positive integer.

Consider another Assumption as follows.

\begin{assumption}\label{assump3}{\rm
We have $\mu\in\Delta_k$ satisfying the following property.
For any $j\in\mathbb{Z}/n{\mathbb{Z}}$, consider
a bipartite graph whose even and odd vertex sets are labeled with
the elements in $\Delta_j$ and $\Delta_{j+k}$, respectively.
The number of edges between
the even vertex $\nu_1$ and the odd vertex $\nu_2$ is
given by $\dim\Hom(\nu_1\mu,\nu_2)$.  We assume that this
bipartite graph is connected.
}\end{assumption}

Note that irreducibility of $\mu$ is now assumed.
As in the case of Assumption \ref{assump1}, 
this property of $\mu$ is equivalent to the following.
For any $\nu_1,\nu_2\in\Delta_j$, we have some positive integer
$l$ that $\dim\Hom(\nu_1(\mu\bar\mu)^l,\nu_2)=
\dim\Hom((\mu\bar\mu)^l,\bar\nu_1\nu_2)>0$.
Thus, if the irreducible decomposition of
some power $(\mu\bar\mu)^l$ contains all endomorphisms in $\Delta_0$,
this Assumption is automatically satisfied.
We fix $\mu$ which satisfies this Assumption \ref{assump3}.

Consider a Frobenius algebra $(\theta,w,x)$ with 
$\theta\in\Sigma(\Delta_0)$, the corresponding
subfactor $N\subset M$ and the inclusion map 
$\iota:N\hookrightarrow M$ so that we have $\th=\bar\iota\iota$.
Let $\Phi_j$ be the set of representatives of 
irreducible unital $*$-algebra homomorphisms
from $N$ to $M$ arising from irreducible decompositions
of homomorphisms in $\{\iota\lambda\mid\lambda\in\Delta_j\}$
for $j\in\mathbb{Z}/n{\mathbb{Z}}$.  All the
irreducible $M$-$N$ morphisms arising from $\iota$
and $\Delta$ appear in irreducible decomposition of
morphisms in 
\[
\{(\iota\bar\iota)^l\iota\lambda\mid l\in\N, \lambda\in\Delta_j\}
=\{(\iota\th^l\lambda\mid l\in\N, \lambda\in\Delta_j\}
\]
by definition, so the union 
$\bigcup_{j\in\mathbb{Z}/n{\mathbb{Z}}}\Phi_j$ covers all
such morphisms up to unitary equivalence.
We now show $\Phi_j$'s are mutually disjoint.
Suppose the same irreducible $M$-$N$ morphism $a$ belongs
to $\Phi_j$ and $\Phi_m$.  Then $\id$ appears in the
irreducible decomposition of $\bar aa$, and because
of $\th\in\Sigma(\Delta_0)$, we know that $\id$ belongs
to $\Delta_{j-m}$, which implies $j=m\in\mathbb{Z}/n\mathbb{Z}$.

\begin{definition}\label{gj}
{\rm  
Define a bipartite graph $\G_j$ for $k\in \mathbb{Z}/n\mathbb{Z}$
as follows. The even and odd vertex sets are labeled with
the elements in $\Phi_j$ and $\Phi_{j+k}$, respectively.
The number of edges between
the even vertex $a$ and the odd vertex $b$ is
given by $\dim\Hom(a\mu,b)$. 
}\end{definition}

By Assumption \ref{assump3}, this graph is connected.
Two graphs $\G_j$ and $\G_m$ might happen to be the same
for different $j$ and $m$, but we distinguish them and
regard them as different graphs now.

Take $\lambda\in\Delta_j$.  We construct a bi-unitary
connection $W_4(\alpha^+_\lambda,\mu)$ as in 
Section \ref{pre}, but
we use only irreducible $M$-$N$ morphisms in $\Delta_0$
for the upper left vertices and only those in $\Delta_j$
for the lower left vertices.  Then the top horizontal
graph of $W_4(\alpha^+_\lambda,\mu)$ is $\G_0$ and
the bottom one is $\G_j$.

Recall that we can compose two bi-unitary connections 
only when the bottom horizontal graph of the former matches
the top horizontal graph of the latter.  By defining the
composition to be zero if they do not match, we consider
a unitary multi-fusion category generated by bi-unitary
connections $W_4(\alpha^+_\lambda,\mu)$.  Now the top
and bottom horizontal graphs of any bi-unitary connection
arising in this way are among $\G_j$'s, so the identity
object in this multi-fusion category is decomposed into
$n$ irreducible objects.  Then in this setting, the
same arguments in the proofs of Theorems \ref{main2}
and \ref{corresp2} work, so we obtain the following two
Theorems.  Example \ref{ex1} gives a concrete explanation
of this setting.

\begin{theorem}\label{main2}
We fix an irreducible object $\mu$ in the original braided
fusion category $\mathcal{C}$
as in Assumptions \ref{assump0}, \ref{assump2}
and \ref{assump3} and a Frobenius
algebra $(\theta,w,x)$ in $\mathcal{C}$ as above.  Then
$\alpha$-induced bi-unitary connections $W_4(\alpha^+_\lambda,\mu)$
arising from the Frobenius algebra
are flat with respect to the initial vertex $\iota$
for all $\lambda$ if and only if the Frobenius
algebra is commutative, or the $Q$-system is local.
\end{theorem}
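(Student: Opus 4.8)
The plan is to rerun the proof of Theorem \ref{main} essentially verbatim, reading every step inside the unitary multi-fusion category generated by the connections $W_4(\alpha^+_\lambda,\mu)$, whose identity object now splits into the $n$ orthogonal components indexed by the connected graphs $\G_0,\dots,\G_{n-1}$. For the ``if'' direction I would first record, for $\lambda\in\Delta_j$, the inclusions $\End((\bar\alpha^+_\lambda\alpha^+_\lambda)^k)\subset\End((\bar\alpha^+_\lambda\alpha^+_\lambda)^k\iota)$ and its variant with an extra $\alpha^+_\lambda$ on the left, exactly as in \eqref{ineq1} and \eqref{ineq1p}. Since $\alpha$-induction is monoidal and duality-preserving we still have $\bar\alpha^+_\lambda\alpha^+_\lambda=\alpha^+_{\bar\lambda\lambda}$, so the dimension count of \eqref{ineq2} and \eqref{ineq2p} is unchanged, and \cite[Theorem 3.3]{S} continues to identify these endomorphism algebras with the flat part of $W_4(\alpha^+_\lambda,\mu)$; the only difference is that flatness is now tested relative to each of the $n$ identity components. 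Commutativity of the Frobenius algebra upgrades the B\"ockenhauer--Evans bound $\dim\Hom(\alpha^+_{\lambda_1},\alpha^+_{\lambda_2})\le\dim\Hom(\theta\lambda_1,\lambda_2)$ of \cite[Theorem 3.9]{BE2} to an equality, which forces equality throughout the chains and hence flatness.

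For the converse I would again read off from flatness that equality holds in the B\"ockenhauer--Evans bound for every pair $\lambda_1,\lambda_2$ occurring in the irreducible decomposition of $(\bar\lambda\lambda)^k$, using that the two sides of \eqref{ineq2} and \eqref{ineq2p} are sums of these termwise inequalities, so that equality of the sums forces termwise equality. The essential change is that Assumption \ref{assump1} is unavailable for irreducible $\mu$; in its place I would use Assumption \ref{assump3}, which in the form with first index $\id$ guarantees that every $\nu\in\Delta_0$ occurs in $(\mu\bar\mu)^l$ for $l$ large. Taking $\lambda=\bar\mu$, so that $(\bar\lambda\lambda)^k=(\mu\bar\mu)^k\in\Delta_0$, both $\id$ and each $\nu\in\Delta_0$ appear in the decomposition for $k$ large; specializing the second index to $\id$ then gives $\dim\Hom(\alpha^+_\nu,\alpha^+_{\id})=\dim\Hom(\nu,\theta)$, the analogue of \eqref{ineq5}, for every $\nu\in\Delta_0$. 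Since $\theta\in\Sigma(\Delta_0)$ we have $Z_{\nu,0}=0$ for $\nu\notin\Delta_0$, so the modular-invariant computation lives entirely on the $\Delta_0$-component; the resulting identity $Z_{\nu,0}=\dim\Hom(\nu,\theta)$ says that $\theta$ coincides with the commutative part $\theta_+$ of \cite[Theorem 4.7]{BE4}, whence \cite[Proposition 3.2]{BE4} yields commutativity of the Frobenius algebra, i.e.\ locality of the $Q$-system.

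I expect the only genuinely new point, and hence the main obstacle, to be the connectivity input. In the ungraded setting one takes $\mu=\bigoplus_{\nu\in\Delta}\nu$ so that Assumption \ref{assump1} reaches all of $\Delta$ simultaneously; here $\mu$ is irreducible and the $\mathbb{Z}/n\mathbb{Z}$-grading pins the relevant powers $(\mu\bar\mu)^k$ inside $\Delta_0$, so a priori one can only equalize pairs lying in a single graded component. The crux is the observation that this suffices: because the dual canonical endomorphism $\theta$ is supported on $\Delta_0$, the modular invariant entries $Z_{\nu,0}$ controlling locality are supported there as well, so the intra-$\Delta_0$ connectivity furnished by Assumption \ref{assump3} recovers precisely the equalities needed. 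A secondary point to verify is that passing to the multi-fusion category---where composition of connections is set to zero unless the intermediate graphs match---does not disturb the identification of flat parts via \cite[Theorem 3.3]{S}, but this is routine once flatness is read component by component.
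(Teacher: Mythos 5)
Your proposal is correct and follows essentially the same route as the paper, which proves Theorem \ref{main2} simply by observing that the arguments for Theorems \ref{main} and \ref{corresp} carry over to the graded, multi-fusion setting. You have in fact made explicit the one point the paper leaves implicit---that since $\theta\in\Sigma(\Delta_0)$ both $Z_{\nu,0}$ and $\dim\Hom(\nu,\theta)$ vanish off $\Delta_0$, so the intra-$\Delta_0$ connectivity supplied by Assumption \ref{assump3} (with $\lambda=\bar\mu$) already yields all the equalities needed to invoke \cite[Proposition 3.2]{BE4}.
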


\begin{theorem}\label{corresp2}
We fix an irreducible object $\mu$ in the original braided
fusion category $\mathcal{C}$
as in Assumptions \ref{assump0}, \ref{assump2}
and \ref{assump3} and a Frobenius
algebra $(\theta,w,x)$ in $\mathcal{C}$.  Then the flat parts
of the $\alpha$-induced bi-unitary connections $W_4(\alpha_\lambda^+,\mu)$
are given by the intertwiner spaces of the flat bi-unitary
connections $W_4(\tilde\alpha^+_{+,\lambda},\mu)$ 
corresponding to the commutative part Frobenius algebra
arising from $\theta_+$ as in Theorem \ref{corresp}.
\end{theorem}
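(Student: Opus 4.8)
The plan is to transport the proof of Theorem~\ref{corresp} into the multi-fusion setting constructed above, verifying at each step that the ingredients respect the $\mathbb{Z}/n\mathbb{Z}$-grading. First I would recall, from the proof of Theorem~\ref{main2}, that the flat part of the $\alpha$-induced bi-unitary connection $W_4(\alpha^+_\lambda,\mu)$ is identified with the endomorphism spaces $\End((\bar\alpha^+_\lambda\alpha^+_\lambda)^k)=\End(\alpha^+_{(\bar\lambda\lambda)^k})$ and $\End(\alpha^+_\lambda(\bar\alpha^+_\lambda\alpha^+_\lambda)^k)$ by \cite[Theorem 3.3]{S}. The only departure from the ungraded argument is that connectivity of the relevant horizontal graphs is now supplied by the connected graphs $\G_j$ of Definition~\ref{gj}, whose connectivity follows from Assumption~\ref{assump3}, in place of Assumption~\ref{assump1}.

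Next I would bring in the intermediate subfactor $N\subset M_+\subset M$ of \cite[Theorem 4.7]{BE4}, with dual canonical endomorphism $\theta_+=\bigoplus_{\lambda}Z_{\lambda,0}\lambda$ and inclusion $\iota_+\colon N\hookrightarrow M_+$. A preliminary point to settle is that $\theta_+\in\Sigma(\Delta_0)$: since the modular invariant $Z$ respects the grading, $Z_{\lambda,0}=0$ unless $\lambda\in\Delta_0$, so the commutative part Frobenius algebra lives entirely in the degree-zero sector and the graded apparatus $\Phi_j$, $\G_j$ applies verbatim to $N\subset M_+$. With this in hand, \cite[Lemma 5.1]{BE4} yields
\[
\End(\alpha^+_{(\bar\lambda\lambda)^k})=\End(\tilde\alpha^+_{+;(\bar\lambda\lambda)^k}),
\]
exactly as in the proof of Theorem~\ref{corresp}.

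Finally, since the Frobenius algebra for $N\subset M_+$ is commutative, Theorem~\ref{main2} applied to this inclusion shows that $W_4(\tilde\alpha^+_{+,\lambda},\mu)$ is flat, whence $\End(\tilde\alpha^+_{+;(\bar\lambda\lambda)^k})=\End(\tilde\alpha^+_{+;(\bar\lambda\lambda)^k}\iota_+)$, and the right-hand side is by construction the intertwiner space of $W_4(\tilde\alpha^+_{+,\lambda},\mu)$. Chaining the three identifications gives that the flat parts of $W_4(\alpha^+_\lambda,\mu)$ coincide with the intertwiner spaces of the flat connections $W_4(\tilde\alpha^+_{+,\lambda},\mu)$, as asserted. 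I expect the main obstacle to be purely the grading bookkeeping: one must confirm that the flat-part characterization of \cite[Theorem 3.3]{S} remains valid sector by sector in the multi-fusion category whose identity object decomposes into the $n$ pieces attached to $\G_0,\dots,\G_{n-1}$, and that the passage to $M_+$ does not mix these sectors. Once $\theta_+$ is seen to be supported in $\Delta_0$ and each $\G_j$ is confirmed connected, the remaining steps are formally identical to the ungraded case of Theorem~\ref{corresp}.
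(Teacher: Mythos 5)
Your proposal is correct and follows essentially the same route as the paper: the paper's proof of Theorem \ref{corresp2} is precisely the observation that the arguments for Theorems \ref{main} and \ref{corresp} carry over to the graded multi-fusion setting, using \cite[Theorem 3.3]{S} for the flat part, \cite[Lemma 5.1]{BE4} for the passage to $\theta_+$, and flatness of the commutative case. Your additional check that $\theta_+\in\Sigma(\Delta_0)$ (so the graded apparatus applies to $N\subset M_+$) is a worthwhile point that the paper leaves implicit via its standing hypothesis $\theta\in\Sigma(\Delta_0)$ and the containment of $\theta_+$ in $\theta$.
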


\begin{example}\label{ex1}{\rm
Consider the modular tensor category $\mathcal{C}$ arising
from the Wess-Zumino-Witten model $SU(2)_{2l}$ and label
the representatives of its irreducible objects as
$0,1,\dots,2l$ as usual.  We realize this modular tensor
category as one consisting of endomorphisms of a type
III factor $N$.  So now the labels $0,1,\dots,2l$ are
for irreducible endomorphisms of $N$.

Let $n$=2, $\Delta_0=\{0,2,\dots,2l\}$,
$\Delta_1=\{1,3,\dots,2l-1\}$, $\mu=1$, and $k$=1 in 
the above setting in this Section.  Then Assumptions
\ref{assump0}, \ref{assump1} and \ref{assump2} are
satisfied.

Then the category we obtain from such bi-unitary connections
is not the original modular tensor category $\mathcal{C}$,
but a multi-fusion category as follows.  Because of disconnectedness
of the graph arising from the choice $\mu=1$, we have two types
of the horizontal graphs.  One graph $\mathcal{G}_0$
connects the vertices $\Delta_0$
to $\Delta_1$, and the other $\mathcal{G}_1$ is the 
reversed graph.  These two
are isomorphic graphs, but we distinguish them.  Because the
composition of two bi-unitary connections is possible only when
the two horizontal graphs match.  Note that 
we have four types of bi-unitary
connections as follows.
(For the standard generator
$\lambda=1$, the top graph is $\mathcal{G}_0$ and the bottom
graph is $\mathcal{G}_1$.)

\begin{enumerate}
\item The top graph is $\mathcal{G}_0$ and the bottom
graph is $\mathcal{G}_1$.
\item The top graph is $\mathcal{G}_0$ and the bottom
graph is $\mathcal{G}_0$.
\item The top graph is $\mathcal{G}_1$ and the bottom
graph is $\mathcal{G}_0$.
\item The top graph is $\mathcal{G}_1$ and the bottom
graph is $\mathcal{G}_1$.
\end{enumerate}

We can definie the composition is 0 if the two graphs do not
match.  In this way, we obtain a multi-fusion category
where the identity object decomposes into 2 components.
This reproduces a multi-fusion category consisting of
4 types of bimodules: $N$-$N$, $N$-$M$, $M$-$N$ and $M$-$M$ 
arising from a subfactor $N\subset M$ with principal
graph $A_{2l+1}$.

Choose a Frobenius algebra $(\theta,w,x)$ in
$\mathcal{C}$ corresponding to the Goodman-de la Harpe-Jones
subfactors realized by one of the $A$-$D$-$E$ Dynkin diagrams
\cite[Section 4.5]{GHJ}.
That is, besides trivial case $\th=\id$, we have
$\th=0\oplus 2l$ in $SU(2)_{2l}$ corresponding to the $D_{l+2}$ case,
$\th=0\oplus6$ in $SU(2)_{10}$ corresponding to the $E_6$ case,
$\th=0\oplus8\oplus16$ in $SU(2)_{16}$ corresponding to the $E_7$ case,
and $\th=0\oplus10\oplus18\oplus20$ 
in $SU(2)_{10}$ corresponding to the $E_8$ case.
The case of even $l$ for $D_{l+2}$, odd $l$ for $D_{l+2}$,
$E_6$, $E_7$, and $E_u$ have been studied in
\cite[Section 6.2]{BE3},
\cite[Section 5.2]{BEK2},
\cite[Section 6.1]{BE3},
\cite[Section 5.3]{BEK2}, and
\cite[Section 6.1]{BE3}, respectively.  

Then the resulting $\alpha$-induced bi-unitary connections
$W_4(\alpha^\pm_\lambda,\mu)$ are the usual bi-unitary
connections on the $A$-$D$-$E$ Dynkin diagrams 
\cite[Fig.~11.32]{EK2}. Theorem \ref{main2}
is then simply a well-known fact on the correspondence between
flatness of bi-unitary connections and commutativity of
the Frobenius algebras, or locality of the $Q$-systems.
(Local $Q$-systems here correspond to local extensions
of the conformal nets arising from $SU(2)_{2l}$.  They
are simple current extensions of order 2 and conformal
embeddings $SU(2)_{10}\subset SO(5)_1$ and
$SU(2)_{28}\subset (G2)_1$.  See \cite[Theorem 2.4]{KL}.)

It is known that the bi-unitary connections on
$E_7$ have flat parts arising from the $D_{10}$ diagram as in
\cite{EK1}.  The intermediate subfactors $M_\pm$ 
arising from the $E_7$ modular invariant matrix $Z$ in this
setting as in \cite[Theorem 4.7]{BE4} are given by
$\th_\pm=0\oplus 16$, which also gives a local simple
current extension of order 2.  Thus the commutative
Frobenius algebras arising from $\th_\pm$ as in
\cite[Theorem 4.7]{BE4} also give the $D_{10}$ diagram.
This is an example of Theorem \ref{corresp2}.
}\end{example}

A family of Examples \ref{ex1} has been already well-understood.
Our Theorems \ref{main2} and \ref{corresp2} show
that this correspondence between flat bi-unitary
connections and commutative Frobenius algebras is not a
simple coincidence, but logical and general necessity.


\begin{thebibliography}{99}  

\bibitem{AH} M. Asaeda and U. Haagerup,
Exotic subfactors of finite depth with Jones indices 
$(5+\sqrt{13)}/2$ and $(5+\sqrt{17})/2$,
\textit{Comm. Math. Phys.} {\bf 202} (1999), 1--63.

\bibitem{BKLR}
M. Bischoff, Y. Kawahigashi, R. Longo, K.-H. Rehren, 
``Tensor categories and endomorphisms of von Neumann algebras'' 
(with applications to Quantum Field Theory),
SpringerBriefs in Mathematical Physics {\bf 3}, (2015). 

\bibitem{BE1}
J. B\"ockenhauer and D. E. Evans, 
Modular invariants, graphs and $\alpha$-induction
for nets of subfactors I,
\textit{Comm. Math. Phys.} {\bf 197} (1998), 361--386.

\bibitem{BE2}
J. B\"ockenhauer and D. E. Evans, 
Modular invariants, graphs and $\alpha$-induction
for nets of subfactors II,
\textit{Comm. Math. Phys.} {\bf 200} (1999), 57--103.

\bibitem{BE3}
J. B\"ockenhauer and D. E. Evans, 
Modular invariants, graphs and $\alpha$-induction
for nets of subfactors III,
\textit{Comm. Math. Phys.} {\bf 205} (1999), 183--228.

\bibitem{BE4}
J. B\"ockenhauer and D. E. Evans, 
Modular invariants from subfactors: 
Type I coupling matrices and intermediate subfactors,
\textit{Comm. Math. Phys.} {\bf 213} (2000), 267--289.

\bibitem{BEK1}
J. B\"ockenhauer, D. E. Evans and Y. Kawahigashi, 
On $\alpha$-induction, chiral generators and modular 
invariants for subfactors,
\textit{Commun. Math. Phys.} {\bf 208} (1999), 429--487. 

\bibitem{BEK2}
J. B\"ockenhauer, D. E. Evans and Y. Kawahigashi, 
Chiral structure of modular invariants for subfactors,
\textit{Commun. Math. Phys.} {\bf 210} (2000), 733--784.

\bibitem{BMWSHV}
N. Bultinck, M. Mari\"en, D. J. Williamson, M. B. \c Sahino\u glu,
J. Haegeman and F. Verstraete,
Anyons and matrix product operator algebras,
\textit{Ann. Physics} {\bf 378} (2017), 183--233. 

\bibitem{DMS}
P. Di Francesco, P. Mathieu \& D. S\'en\'echal,
``Conformal Field Theory'', Springer-Verlag, 
Berlin-Heidelberg-New York, 1996.

\bibitem{EGNO} P. Etingof, S. Gelaki, D. Nikshych
and V. Ostrik, ``Tensor categories'', 
Mathematical Surveys and Monographs, {\bf 205},
American Mathematical Society, Providence (2015).

\bibitem{EK1} D. E. Evans and Y. Kawahigashi,
The $E_7$ commuting squares produce $D_{10}$ as principal graph,
\textit{Publ. Res. Inst. Math. Sci.} {\bf 30} (1994), 151--166. 

\bibitem{EK2} D. E. Evans and Y. Kawahigashi,
``Quantum Symmetries on Operator Algebras'',
Oxford University Press, Oxford (1998).

\bibitem{EK3}  D. E. Evans, Y. Kawahigashi,
Subfactors and mathematical physics,
\textit{Bull. Amer. Math. Soc.} {\bf 60} (2023), 459--482.

\bibitem{GHJ}
F. M. Goodman, P. de la Harpe, V. F. R. Jones,
``Coxeter graphs and towers of algebras'',
Mathematical Sciences Research Institute Publications, {\bf 14},
Springer-Verlag, New York (1989).

\bibitem{H}
S. Hollands,
Anyonic Chains -- $\alpha$-Induction -- CFT -- Defects -- Subfactors,
\textit{Comm. Math. Phys.} {\bf 399} (2023), 1549--1621.

\bibitem{I}
M. Izumi,
Subalgebras of infinite $C^*$-algebras with
finite Watatani indices II: Cuntz-Krieger algebras,
\textit{Duke Math. J.} {\bf 91} (1998), 409--461.

\bibitem{J}
V. F. R. Jones, Index for subfactors, 
\textit{Invent. Math.} {\bf 72} (1983), 1--25.

\bibitem{K1}
Y. Kawahigashi, On flatness of Ocneanu's connections on the 
Dynkin diagrams and classification of subfactors, 
\textit{J. Funct. Anal.} {\bf 127} (1995), 63--107.

\bibitem{K2}
Y. Kawahigashi,
Conformal field theory, tensor categories and operator algebras,
\textit{J. Phys. A} {\bf 48} (2015), 303001, 57 pp.

\bibitem{K3} Y. Kawahigashi, 
Conformal field theory, vertex operator algebras and operator algebras,
Proceedings of the International Congress of Mathematicians, 
Vol. III, 2597-2616, World Scientific, Rio de Janeiro, 2018.

\bibitem{K4}
Y. Kawahigashi,
A remark on matrix product operator algebras, anyons and subfactors,
\textit{Lett. Math. Phys.} {\bf 110} (2020), 1113--1122.

\bibitem{K5}
Y. Kawahigashi,
Projector matrix product operators, anyons and
higher relative commutants of subfactors,
\textit{Math. Ann.} {\bf 387} (2023), 2157--172.

\bibitem{K6}
Y. Kawahigashi, Two-dimensional topological order and operator algebras,
\textit{Internat. J. Modern Phys. B} 35 (2021), 2130003 (16 pages).

\bibitem{K7}
Y. Kawahigashi, 
A characterization of a finite-dimensional commuting square
producing a subfactor of finite depth,
\textit{Internat. Math. Research Notices} {\bf 2023}
(2023), 8419--8433.

\bibitem{K8}
Y. Kawahigashi,
$\alpha$-induction for bi-unitary connections,
\textit{Quantum Topol.}
{\bf 15} (2024), 503--536.

\bibitem{KL}
Y. Kawahigashi and R. Longo, 
Classification of local conformal nets. Case $c < 1$,
\textit{Ann. of Math.} {\bf 160} (2004), 493--522.

\bibitem{KLM}
Y. Kawahigashi, R. Longo  and M. M\"uger, 
Multi-interval subfactors and modularity of representations
in conformal field theory,
\textit{Comm. Math. Phys.} {\bf 219} (2001), 631--669.

\bibitem{L1} R. Longo,
\textit{Index of subfactors and statistics of quantum fields, I},
Comm. Math. Phys. \textbf{126} (1989), 217--247.

\bibitem{L2} R. Longo,
\textit{Index of subfactors and statistics of quantum fields, II.
Correspondences, braid group statistics and Jones polynomial},
Comm. Math. Phys.  \textbf{130} (1990), 285--309.

\bibitem{L3} R. Longo,
A duality for Hopf algebras and for subfactors. I,
\textit{Comm. Math. Phys.} {\bf 159} (1994), 133--150.

\bibitem{LR} R. Longo, K.-H. Rehren, 
Nets of subfactors, 
\textit{Rev. Math. Phys.} {\bf 7} (1995), 567--597.

\bibitem{M}
T. Masuda, 
An analogue of Longo's canonical endomorphism for 
bimodule theory and its application to asymptotic inclusions,
\textit{Internat. J. Math.} {\bf 8} (1997), 249--265. 

\bibitem{O1}
A. Ocneanu, 
Quantized groups, string algebras and Galois theory for algebras,
in: Operator algebras and applications, vol. 2, Warwick, 1987,
London Mathematical Society, Lecture Note Series, {\bf 136},
Cambridge University Press, Cambridge, (1988), pp. 119--172. 

\bibitem{O2}
A. Ocneanu, 
Paths on Coxeter diagrams: from Platonic solids and 
singularities to minimal models and subfactors, 
(Notes recorded by S. Goto), in
``Lectures on operator theory'', (ed. B. V. Rajarama Bhat et al.), 
The Fields Institute Monographs, Providence, (2000),
243--323.

\bibitem{P1}
S. Popa,
Maximal injective subalgebras in factors associated with free groups,
\textit{Adv. Math.} {\bf 50} (1983), 27--48.

\bibitem{P2}
S. Popa, Classification of amenable subfactors of type II,
\textit{Acta Math.} {\bf 172} (1994), 163--255.

\bibitem{S}
N. Sato, 
Constructing a nondegenerate commuting square from 
equivalent systems of bimodules,
\textit{Internat. Math. Res. Notices} {\bf 1997} (1997), 967--981.

\bibitem{Sc}
J. K. Schou,
Commuting Squares and Index for Subfactors,
Ph.D. thesis, 1990, Odense University,
arXiv:1304.5907.

\bibitem{X}
F. Xu, New braided endomorphisms from conformal inclusions,
\textit{Comm. Math. Phys.}  {\bf 192} (1998), 349--403.

\end{thebibliography}
\end{document}